\newtheorem{prop}{Proposition}[section]
\newtheorem{lemma}[prop]{Lemma}
\newtheorem{thm}[prop]{Theorem}
\newtheorem{defn}[prop]{Definition}
\newtheorem{cor}[prop]{Corollary}
\author{Yonah Biers-Ariel\affiliationmark{1}\thanks{Research Supported by NSF Grant 1263009}
  \and Anant Godbole\affiliationmark{2}\thanks{Research Supported by NSF Grant 1263009}
  \and Elizabeth Kelley\affiliationmark{3}\thanks{Research Supported by NSF Grant 1263009}}
\title[Expected Number of Subsequences in Random Binary Strings]{Expected Number of Distinct Subsequences in Randomly Generated Binary Strings}
\affiliation{
 Rutgers University, USA\\
East Tennessee State University, USA\\
University of Minnesota, USA}
\keywords{distinct subsequences, random strings, subadditivity}
\begin{document}
\publicationdetails{19}{2018}{2}{10}{3287}
\maketitle
\begin{abstract}
When considering binary strings, it's natural to wonder how many distinct subsequences might exist in a given string. Given that there is an existing algorithm which provides a straightforward way to compute the number of distinct subsequences in a fixed string, we might next be interested in the expected number of distinct subsequences in random strings. This expected value is already known for random binary strings where each letter in the string is, independently, equally likely to be a 1 or a 0. We generalize this result to random strings where the letter 1 appears independently with probability $\alpha \in [0,1]$.  Also, we make some progress in the case of random strings from an arbitrary alphabet as well as when the string is generated by a two-state Markov chain.
\end{abstract}

%
%

\section{Introduction}
This paper uses the definitions for string and subsequence provided in \cite{Flaxman}. A binary string of length $n$ is some $A=a_1a_2...a_n \in \{0,1\}^n$, and another string $B$ of length $m \le n$ is a subsequence of $A$ if there exist indices $i_1 < i_2 < ...< i_m$ such that 
$$B=a_{i_1}a_{i_2}...a_{i_m}$$
We use the notation $B \preceq A$ when $B$ is a subsequence of $A$.

Now, suppose $T_n$ is a {\it fixed} binary string of length $n$. Then, define $t_i$ to be the $i^{th}$ letter of $T_n$, $T_i$ to be the string formed by truncating $T_n$ after the $i\textsuperscript{th}$ letter, and $\phi(T_n)$ to be the number of distinct subsequences in $T_n$. If we let $S_n$ be a {\it random} binary string of length $n$, it was shown in \cite{Flaxman} that when Pr$[s_i=1]=.5$ (that is, when each independently generated letter in $S_n$ is equally likely to be a 0 or a 1), then $E[\phi(S_n)] \sim k(\frac{3}{2})^n$ for a constant $k$. Later, \cite{Collins} improved this result by finding that $E[\phi(S_n)]=2(\frac{3}{2})^n-1$ under the same conditions.

In Section 2, we generalize the second result and find a formula for the expected value of $\phi(S_n)$ when Pr$[s_i=1]=\alpha \in (0,1)$. Since the cases when Pr$[s_i=1]$ is 0 or 1 are trivial, we will then have $E[\phi(S_n)]$ when $\alpha \in [0,1]$. Our method for finding this formula is very different from that used by  \cite{Collins}. We will define a new property of a string---the number of new distinct subsequences--and then use these numbers as the entries in a binary tree. Our formula is then given as a weighted sum of the entries in this tree.  In Section 3 we produce recursions for the expected number of subsequences in two more complicated cases, namely when (i) the string of letters is independently generated in a non-uniform fashion from an arbitrary alphabet; and (ii) the binary string is Markov-dependent.  We also show, using subadditivity arguments, that the expected number of distinct subsequences in the first case above is asymptotic to $c^n$ for some $c$, in the sense that
\[(E[\phi(S_n)])^{1/n}\to c\enspace(n\to\infty).\]
Finally, in Section 4 we indicate some important directions for further investigation.

\section{Main Result}\label{main}
In this section, we let Pr$[s_i=1]=\alpha \in [0,1]$. We first consider the trivial cases, when $\alpha \in \{0,1\}$.

\begin{prop} If $\alpha \in \{0,1\}$, then $\phi(S_n)=n$ for any string $S_n$. \end{prop}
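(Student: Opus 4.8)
The plan is to observe first that the hypothesis $\alpha \in \{0,1\}$ collapses the randomness entirely, so that $S_n$ is in fact a deterministic string. If $\alpha = 1$, then $\mathrm{Pr}[s_i = 1] = 1$ for every $i$, and since the letters are generated independently, with probability $1$ we have $S_n = \underbrace{11\cdots1}_{n}$; symmetrically, $\alpha = 0$ forces $S_n = 00\cdots0$. Thus it suffices to count the distinct subsequences of a single constant string of length $n$, and the two endpoint cases are interchangeable by swapping the roles of $0$ and $1$.

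Next I would carry out the (very short) combinatorial core: counting the distinct subsequences of the constant string $c^n$, where $c \in \{0,1\}$. The key observation is that any subsequence of $c^n$ consists solely of copies of $c$, so a subsequence is completely determined by its length. Hence the map sending a subsequence to its length is a bijection between the set of distinct nonempty subsequences of $c^n$ and the set $\{1, 2, \ldots, n\}$: every length $k$ with $1 \le k \le n$ is realized (take the first $k$ letters), and no subsequence can have length exceeding $n$. This yields exactly $n$ distinct subsequences, so $\phi(S_n) = n$.

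This statement has essentially no hard step; the only point requiring care is the bookkeeping around the empty subsequence. I would therefore make explicit which convention is in force, noting that the count $n$ corresponds precisely to the nonempty subsequences $c, cc, \ldots, c^n$, so that this degenerate base case stays consistent with the formula developed for general $\alpha$ in the remainder of the section. Once the constant-string count is pinned down, the conclusion follows immediately and uniformly for both $\alpha = 0$ and $\alpha = 1$.
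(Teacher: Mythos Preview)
Your proposal is correct and follows essentially the same approach as the paper: both reduce to the constant string $c^n$ and observe that its distinct subsequences are exactly $c, cc, \ldots, c^n$, one for each length from $1$ to $n$. Your version is simply a bit more explicit about the bijection and the empty-subsequence convention.
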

\begin{proof} If $\alpha \in \{0,1\}$, then either $S_n=11...1$ or $S_n=00...0$. In either case, there is exactly one distinct subsequence of each length between 1 and $n$.
\end{proof}

With those cases dispensed with, we will assume for the remainder of this section that $\alpha \in (0,1)$. Before continuing, we need to establish three new definitions.  We recall that $S_n$ is a random variable that equals a fixed string $T_n$ according to a specified probability distribution.

\begin{defn}The \emph{weight} of $T_n$, a fixed binary string of length $n$, is $\phi(T_n) \cdot \text{Pr}[S_n=T_n]$. That is, the weight of $T_n$ is the number of distinct subsequences in $T_n$ times the probability that a random length $n$ string is $T_n$.
\end{defn}

\begin{defn}A \emph{new} subsequence of $T_n$ is a subsequence contained in $T_n$ but not contained in $T_{n-1}$. We will let $\nu(T_n)$ be the number of distinct new subsequences in a string $T_n$.
\end{defn}

\begin{defn} The \emph{new weight} of $T_n$ is the product $\nu(T_n) \cdot \text{Pr}[S_n=T_n]$.
\end{defn}

It will be useful to be able to compute the number of new distinct subsequences in a string; to do so we modify a result from \cite{Elzinga}. 

\begin{lemma}[Elzinga, Rahmann, and Wang] \label{ERW} Given $T_n$, let $l$ be the greatest number less than $n$ such that $t_l=t_n$, and if no such number exists, let $l=0$. Then,
\\
$$\nu(T_n)=
\begin{cases}
n &\mbox{if } l=0;
\\
\displaystyle \sum_{i=l}^{n-1} \nu(T_i) &\mbox{if } l>0.
\end{cases}$$
\end{lemma}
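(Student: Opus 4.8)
The plan is to prove the identity by directly counting the new subsequences of $T_n$ rather than by manipulating $\phi$. The starting observation is that a subsequence $B$ of $T_n$ fails to be a subsequence of $T_{n-1}$ precisely when every embedding of $B$ into $T_n$ must use the final position $n$; since the largest position used by any (increasing) embedding is matched to the last letter of $B$, this forces $B$ to end in $t_n$. Writing $B = w\,t_n$ and stripping the trailing letter then sets up a bijection between the new subsequences of $T_n$ and the words $w$ (possibly empty) with $w\,t_n \preceq T_n$ but $w\,t_n \not\preceq T_{n-1}$. I would first record the easy equivalence $w\,t_n \preceq T_n \iff w \preceq T_{n-1}$, obtained by reserving position $n$ for the trailing $t_n$.

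With this reduction in hand the case $l=0$ is immediate in the binary setting: $l=0$ means $t_n$ never occurs among $t_1,\dots,t_{n-1}$, so $T_{n-1}$ is the constant string in the opposite letter, and then $w\,t_n \not\preceq T_{n-1}$ for every $w \preceq T_{n-1}$. Counting these $w$ (including the empty word) gives $\phi(T_{n-1})+1 = n$, as claimed. The substantive case is $l>0$, where the heart of the matter is the translation $w\,t_n \not\preceq T_{n-1} \iff w \not\preceq T_{l-1}$. This is the step I expect to be the main obstacle: it requires showing that, among all occurrences of $t_n$ in positions $1,\dots,n-1$, the last one---position $l$---is the most permissive place to anchor the trailing letter, so that $w\,t_n$ embeds into $T_{n-1}$ exactly when $w$ embeds into the prefix $T_{l-1}$. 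Pinning down the boundary indices, and checking the empty-word edge case (which correctly excludes $B=t_n$ when $t_n$ already appears, since $\emptyset \preceq T_{l-1}$), is where the care lies.

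Once that equivalence is established, $\nu(T_n)$ equals the number of $w$ with $w \preceq T_{n-1}$ but $w \not\preceq T_{l-1}$. To finish I would partition this collection according to the ``birth index'' of each $w$, namely the least $i$ with $w \preceq T_i$. Because $w \not\preceq T_{l-1}$ this index is at least $l$, and because $w \preceq T_{n-1}$ it is at most $n-1$; moreover $w$ is born at $i$ exactly when $w \preceq T_i$ and $w \not\preceq T_{i-1}$, that is, exactly when $w$ is a new subsequence of $T_i$. Summing over the possible birth indices then yields \[\nu(T_n)=\sum_{i=l}^{n-1}\#\{w : w \preceq T_i,\ w \not\preceq T_{i-1}\}=\sum_{i=l}^{n-1}\nu(T_i),\] which is the desired recursion. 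Equivalently one could note $\nu(T_i)=\phi(T_i)-\phi(T_{i-1})$ and telescope, but the partition by birth index keeps the combinatorial content transparent and self-contained.
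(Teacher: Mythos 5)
Your proposal is correct and follows essentially the same route as the paper's proof: strip the forced trailing letter $t_n$, establish that $w\,t_n$ is new in $T_n$ exactly when $w \preceq T_{n-1}$ but $w \not\preceq T_{l-1}$, and then count such $w$ by the index at which they first appear, yielding $\sum_{i=l}^{n-1}\nu(T_i)$. The only cosmetic differences are that you handle $l=0$ via $\phi(T_{n-1})+1$ rather than listing the $n$ new subsequences explicitly, and you spell out the birth-index partition that the paper leaves implicit.
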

\noindent\textit{Proof:} Fix $T_n$ and suppose $l=0$. Then, without loss of generality, assume that $T_n$ consists of $n-1$ 0s followed by a 1. The new subsequences contained in $T_n$ are exactly those which contain a 1. There are $n$ such subsequences: $1, 01, 001,..., \underbrace{00...0}_{n-1}1$, so $\nu(T_n)=n$.
\\
\indent Now suppose $l>0$ and $U_k$ is a new subsequence of length $k$ in $T_n$. We assume that the last letter in $U_k$ is $t_n$ because otherwise $U_k$ is clearly not new. If $U_{k-1} \preceq T_{l-1}$, then we could use $t_l$ to complete $U_k$, so $U_k \preceq T_l$ and $U_k$ would not be new in $T_n$. Conversely, if $U_{k-1} \not\preceq T_{l-1}$, then $U_k$ cannot be completed by $t_l$, nor can it be completed by any other $t_i$ before $t_n$, so $U_k$ is a new subsequence in $T_n$. Therefore, there is one distinct new subsequence in $T_n$ for every distinct new subsequence found in some $T_i$ with $l \le i \le n-1$. Summing up all of those distinct new subsequences gives the number of distinct new subsequences in $T_n$.
{\raggedright\qed}
\\
\\
Now, let $B$ be a binary tree whose entries are binary strings, and let $B_{n,m}$ be the $m\textsuperscript{th}$ entry in the $n\textsuperscript{th}$ row of $B$. The root of $B$ is the empty string, each left child is its parent with a 1 appended, and each right child is its parent with a 0 appended. If we call the first row ``row 0", then row $n$ of this tree contains all length $n$ binary strings. Rows 0-3 of this tree are shown below:

\begin{center}
\begin{tikzpicture}[node/.style={circle, draw=black!0, fill=green!0, thin, minimum size=2mm}, level/.style={sibling distance = 8cm/#1, level distance = 1.5cm},]
\node[node] () {()}
child
{
	node[node] {1}
	child
	{
		node[node]{11}
		child
		{
			node[node]{111}
		}
		child
		{
			node[node]{110}
		}
	}
	child
	{
		node[node] {10}
		child
		{
			node[node]{101}
		}
		child
		{
			node[node]{100}
		}
	}
}
child
{
	node[node] {0}
	child
	{
		node[node]{01}
		child
		{
			node[node]{011}
		}
		child
		{
			node[node]{010}
		}
	}
	child
	{
		node[node]{00}
		child
		{
			node[node]{001}
		}
		child
		{
			node[node]{000}
		}
	}
}; 
\end{tikzpicture}
\end{center}

Next, we form the binary tree $B'$ with $B'_{n,m}$ denoting the $m\textsuperscript{th}$ entry in the $n\textsuperscript{th}$ row of $B'$. Then, we define each $B'_{n,m}$ to be $\nu(B_{n,m})$ which we can calculate using Lemma 2.5. Finally, for each child $B'_{n,m}$ we assign the edge between it and its parent $B'_{n-1,\lceil\frac{m}{2}\rceil}$ a weight equal to Pr$[S_n=B_{n,m}|S_{n-1}=B_{n-1,\lceil\frac{m}{2}\rceil}]$. Thus we give each edge going to a left child the weight $\alpha$ and each edge going to a right child the weight $1-\alpha$. Rows 0-3 of $B'$ are shown below:
\\
\begin{center}
\begin{tikzpicture}[node/.style={circle, draw=black!0, fill=green!0, thin, minimum size=2mm}, level/.style={sibling distance = 8cm/#1, level distance = 1.5cm},]
\node[node] (1) {0}
child
{
	node[node] {1}
	child
	{
		node[node]{1}
		child
		{
			node[node]{1}
			edge from parent node[left]{$\alpha$}
		}
		child
		{
			node[node]{3}
			edge from parent node[right]{$1-\alpha$}
		}
		edge from parent node[left]{$\alpha$}
	}
	child
	{
		node[node] {2}
		child
		{
			node[node]{3}
			edge from parent node[left]{$\alpha$}
		}
		child
		{
			node[node]{2}
			edge from parent node[right]{$1-\alpha$}
		}
		edge from parent node[right]{$1-\alpha$}
	}
	edge from parent node[left]{$\alpha$}
}
child
{
	node[node] {1}
	child
	{
		node[node]{2}
		child
		{
			node[node]{2}
			edge from parent node[left]{$\alpha$}
		}
		child
		{
			node[node]{3}
			edge from parent node[right]{$1-\alpha$}
		}
		edge from parent node[left]{$\alpha$}
	}
	child
	{
		node[node]{1}
		child
		{
			node[node]{3}
			edge from parent node[left]{$\alpha$}
		}
		child
		{
			node[node]{1}
			edge from parent node[right]{$1-\alpha$}
		}
		edge from parent node[right]{$1-\alpha$}
	}
	edge from parent node[right]{$1-\alpha$}
}; 
\end{tikzpicture}
\end{center}

It is clear that the value of the root should be 0 and the value of its two children should be 1. Moreover, we can apply Lemma 2.5 to find that each row begins and ends with a 1. To characterize the remaining entries of $B'$ we need two lemmas which together will give us that the portion of each row between the initial and final 1s consists of pairs of identical numbers. The numbers in every other pair are the sums of the parents of those elements, and the numbers in the remaining pairs have the same value as their parents. For instance, the first pair of the third row is a pair of 3s and parents of the elements of this pair are a 1 and a 2 which sum to 3. Then, the second pair of the third row is a pair of 2s, and the parents of the elements of this pair are also 2s. 

\begin{lemma}\label{joint} Suppose $n \ge 2$ and $m \equiv 2$ (mod 4). Then, $B'_{n,m}=B'_{n,m+1}=B'_{n-1,\frac{m}{2}}+B'_{n-1,\frac{m}{2}+1}$. \end{lemma}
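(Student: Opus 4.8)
The plan is to convert this purely index-theoretic statement into a statement about four explicit strings and then apply Lemma~\ref{ERW} directly.

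First I would unpack the tree indexing. The hypothesis $m \equiv 2 \pmod 4$ makes $m$ even, so $B_{n,m}$ is a right child (a $0$ appended), while $m+1$ is odd, so $B_{n,m+1}$ is a left child (a $1$ appended). Writing $j = m/2$, the congruence forces $j$ to be odd, so $B_{n-1,j}$ and $B_{n-1,j+1}$ are a left child and a right child of a common parent $P := B_{n-2,(j+1)/2}$, a string of length $n-2$. Hence $B_{n-1,j} = P1$ and $B_{n-1,j+1} = P0$, and appending the final letters gives $B_{n,m} = P10$ and $B_{n,m+1} = P01$. Since $B'_{n,m} = \nu(P10)$, $B'_{n,m+1} = \nu(P01)$, $B'_{n-1,m/2} = \nu(P1)$, and $B'_{n-1,m/2+1} = \nu(P0)$, the entire claim reduces to the single pair of identities
$$\nu(P10) = \nu(P01) = \nu(P1) + \nu(P0).$$

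The heart of the argument is the identity $\nu(P10) = \nu(P1) + \nu(P0)$, which I would establish with Lemma~\ref{ERW}. The key observation is that every truncation of $P10$ of length at most $n-2$ equals the corresponding truncation of $P$, while the length-$(n-1)$ truncation of $P10$ is exactly $P1$. Applying the lemma to $P10$, its last letter is $0$ and, because position $n-1$ carries a $1$, the relevant index $l$ is the position of the last $0$ inside $P$. The recursion reads $\nu(P10) = \sum_{i=l}^{n-1}\nu((P10)_i)$, and splitting off the final term gives $\sum_{i=l}^{n-2}\nu(P_i) + \nu(P1)$. But applying the lemma to $P0$ shows $\nu(P0) = \sum_{i=l}^{n-2}\nu(P_i)$ with the very same $l$ (the last $0$ of $P$), whence $\nu(P10) = \nu(P0) + \nu(P1)$. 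The identity $\nu(P01) = \nu(P1) + \nu(P0)$ then follows from the identical computation with the roles of $0$ and $1$ interchanged: the last letter is now $1$, the relevant index is the last $1$ of $P$, and the length-$(n-1)$ truncation of $P01$ is $P0$.

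The one place requiring care, and the main obstacle, is the degenerate branch of Lemma~\ref{ERW} where $l = 0$ --- that is, when $P$ contains no $0$ (so $P$ is a block of $1$'s, possibly empty, as happens when $n = 2$) for the first identity, or no $1$ for the second. There the sum form of the recursion does not apply and one must instead invoke the length form $\nu(\cdot) = n$. I would check these cases by hand: if $P = 1\cdots1$ then $\nu(P10) = n$, $\nu(P0) = n-1$, and $\nu(P1) = 1$, so the identity still holds, and symmetrically for $P = 0\cdots0$. Once these boundary cases are verified, both identities hold for every $P$, completing the proof.
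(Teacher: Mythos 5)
Your proof is correct and follows essentially the same route as the paper's: both identify the two nodes as $T_{n-2}10$ and $T_{n-2}01$, expand $\nu$ via Lemma~\ref{ERW} and match the resulting sums to $\nu(T_{n-2}1)+\nu(T_{n-2}0)$, and treat the constant-string (i.e.\ $l=0$) case separately by hand. The only difference is cosmetic: you spell out the index arithmetic showing why $m\equiv 2 \pmod 4$ forces the grandchildren to be $P10$ and $P01$, which the paper simply asserts.
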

\begin{proof}Given these restrictions on $n$ and $m$, both $B_{n,m}$ and $B_{n,m+1}$ are grandchildren of the same string $T_{n-2}$. Then, $B_{n,m}=T_{n-2}10$ and $B_{n,m+1}=T_{n-2}01$. First, consider the case when $T_{n-2}$ consists of only 0s or only 1s, and without loss of generality, assume it consists of only 0s. Using Lemma 2.5 we get the following equalities:
\begin{align*} \nu(T_{n-2}10)&=\nu(T_{n-2})+\nu(T_{n-2}1)=1+(n-1)=n; \\
\nu(T_{n-2}01)&=n; \\
\nu(T_{n-2}0)+\nu(T_{n-2}1)&=1+(n-1)=n,
\end{align*}
so the lemma is proved in this case. Otherwise, there exist $k,l >0$ such that $k$ is the greatest integer with $t_k=0$ and $l$ is the greatest integer with $t_l=1$. Then, the following hold, again by Lemma 2.5:
\begin{align*} \nu(T_{n-2}0)&=\displaystyle \sum_{i=k}^{n-2} \nu(T_{i}); \\
\nu(T_{n-2}1)&=\displaystyle \sum_{i=l}^{n-2} \nu(T_{i}); \\
\nu(T_{n-2}01)&=\displaystyle \sum_{i=l}^{n-2} \nu(T_{i})+\nu(T_{n-2}0)=\nu(T_{n-2}1)+\nu(T_{n-2}0); \\
\nu(T_{n-2}10)&=\displaystyle \sum_{i=k}^{n-2} \nu(T_{i})+\nu(T_{n-2}1)=\nu(T_{n-2}0)+\nu(T_{n-2}1).
\end{align*}
Since $T_{n-2}1=B_{n-1,\frac{m}{2}}$ and $T_{n-2}0=B_{n-1,\frac{m}{2}+1}$, this completes the proof.
\end{proof}
\begin{lemma}\label{single} Suppose $n \ge 2$, $m \equiv 0$ (mod 4), and $m \neq 2^n$. Then $B'_{n,m}=B'_{n,m+1}=B'_{n-1,\frac{m}{2}}=B'_{n-1,\frac{m}{2}+1}$ \end{lemma}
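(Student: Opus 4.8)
The plan is to show that all four quantities coincide by first reducing each of the two ``vertical'' equalities to the entry directly above it via Lemma~\ref{ERW}, and then disposing of the one remaining cross equality by an induction that leans on Lemma~\ref{joint}. Throughout I would record the boundary bookkeeping at the outset: since $m\equiv 0\pmod 4$ and $m\neq 2^n$, the admissible values satisfy $4\le m\le 2^n-4$, so $2\le \tfrac m2\le 2^{n-1}-2$, which means the indices $\tfrac m2$ and $\tfrac m2+1$ lie strictly between the two boundary $1$'s of row $n-1$.

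First I would read the last two letters of the relevant strings straight off the tree. As $m$ is even, $B_{n,m}$ is a right child and so ends in $0$; its parent $B_{n-1,m/2}$ has even index $m/2$, is therefore also a right child, and so ends in $0$ as well. Hence $B_{n,m}$ ends in $00$. Symmetrically, $m+1$ is odd, so $B_{n,m+1}$ is a left child ending in $1$, and its parent $B_{n-1,m/2+1}$ has odd index $m/2+1$, is a left child, and ends in $1$; thus $B_{n,m+1}$ ends in $11$.

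The crux is that appending a letter equal to the previous one makes the index $l$ of Lemma~\ref{ERW} equal to $n-1$, so its sum collapses to a single term. Applying this to $B_{n,m}$ (whose last two letters are $00$) gives
\[
B'_{n,m}=\nu(B_{n,m})=\nu(B_{n-1,m/2})=B'_{n-1,m/2},
\]
and the same argument applied to $B_{n,m+1}$ (last two letters $11$) gives $B'_{n,m+1}=B'_{n-1,m/2+1}$. This reduces the whole lemma to the single equality $B'_{n-1,m/2}=B'_{n-1,m/2+1}$, i.e.\ to the claim that the pair of row $n-1$ with left index $m/2$ is balanced.

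To finish I would induct on $n$, the base case $n=2$ being vacuous since the only admissible $m$ there is $4=2^2$, which is excluded. For the inductive step, note that $m/2$ is even and lies in the inner range $[2,\,2^{n-1}-2]$, so the pair $(B'_{n-1,m/2},B'_{n-1,m/2+1})$ is governed either by Lemma~\ref{joint} when $m/2\equiv 2\pmod 4$, or by the present lemma at row $n-1$ when $m/2\equiv 0\pmod 4$ (and then $m/2\neq 2^{n-1}$ because $m\neq 2^n$). In either case the conclusion already asserts that the two entries of the pair agree, so $B'_{n-1,m/2}=B'_{n-1,m/2+1}$, completing the proof. The main obstacle here is organizational rather than computational: I must set up the induction so that invoking ``balance of the row-$(n-1)$ pair'' is not circular. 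This works precisely because the $m/2\equiv 2\pmod 4$ case always falls to the self-contained Lemma~\ref{joint}, so the recursion on Lemma~\ref{single} bottoms out, and because the hypotheses $m\equiv 0\pmod 4$ and $m\neq 2^n$ are exactly what keep $(\tfrac m2,\tfrac m2+1)$ an admissible inner pair at every level.
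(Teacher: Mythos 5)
Your proof is correct and follows essentially the same route as the paper's: read off the last two letters from the parity of the indices, collapse the two vertical equalities via Lemma~\ref{ERW} (the repeated-last-letter case giving $l=n-1$), and then settle $B'_{n-1,m/2}=B'_{n-1,m/2+1}$ by induction on $n$, splitting on $m/2 \bmod 4$ between Lemma~\ref{joint} and the induction hypothesis. Your explicit remarks on why the recursion is not circular and on the boundary bookkeeping are a slight elaboration of what the paper leaves implicit, but the argument is the same.
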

\begin{proof}
We will proceed by induction on $n$. As a base case, note that when $n=2$, the hypotheses of Lemma 2.7 are never satisfied, so it is true.

Now, suppose Lemma 2.7 holds for $n<p$, and consider $B'_{p,m}$ where $m$ satisfies the hypotheses of the lemma. Since $m \equiv 0$ (mod 4), it follows that $B_{p,m}=B_{p-1,\frac{m}{2}}0$, and $B_{p-1,\frac{m}{2}}$ also ends in 0. Then, $m+1 \equiv 1$ (mod 4), so $B_{p,m+1}=B_{p-1,\frac{m}{2}+1}1$, and $B_{p-1,\frac{m}{2}+1}$ also ends in 1. Therefore, by Lemma 2.5, $B'_{p,m}=B'_{p-1,\frac{m}{2}}$ and $B'_{p,m+1}=B'_{p-1,\frac{m}{2}+1}$. Now, $B'_{p-1,\frac{m}{2}}$ and $B'_{p-1,\frac{m}{2}+1}$ are consecutive elements and $\frac{m}{2}$ is even. If $\frac{m}{2} \equiv 2$ (mod 4), then $B'_{p-1,\frac{m}{2}}=B'_{p-1,\frac{m}{2}+1}$ by Lemma 2.6, and if $\frac{m}{2} \equiv 0$ (mod 4), then $B'_{p-1,\frac{m}{2}}=B'_{p-1,\frac{m}{2}+1}$ by the induction hypothesis. In either case, the proof is finished by induction.
\end{proof}

Now, the original question about distinct subsequences can reinterpreted as a question about the tree $B'$. In particular, if we find the path from each node $B'_{n,m}$ to the root and call the product of the weights of all the edges on that path $p_{n,m}$, we find that $p_{n,m}=\text{Pr}[S_n=B_{n,m}]$, and that
\begin{equation} \label{crux}
E[\phi(S_n)]= E\bigg[\displaystyle \sum_{i=1}^n \nu(S_i)\bigg]= \displaystyle \sum_{i=1}^n E[\nu(S_i)] =  \displaystyle \sum_{i=1}^n \displaystyle \sum_{j=1}^{2^i} B'_{i,j}\cdot p_{i,j}
\end{equation}
The first equality comes from the fact that each subsequence in $S_n$ is new exactly once, the second equality holds by linearity of expectation, and the third equality rewrites $E[\nu(S_i)]$ using the definition of expectation.

Now, recall the definition of new weight and note that the combined new weight of all the strings in row $i$ of $B$ is given by $\displaystyle \sum_{j=1}^{2^i} B'_{i,j}\cdot p_{i,j}$. With $i$ fixed, we define the quantity $a_i$ to be the total new weight of left children, i.e., strings ending in 1, in row $i$ of $B$ and $b_i$ to be the total new weight of right children, i.e., strings ending in 0, in row $i$ of $B$. We now find two simultaneous recurrence relations that describe $a_i$ and $b_i$.

\begin{lemma} The sequences $\{a_i\}$ and $\{b_i\}$ satisfy the following recurrence relations:  $$a_i=a_{i-1}+\alpha b_{i-1}; \quad b_i=b_{i-1}+(1-\alpha)a_{i-1}.$$\end{lemma}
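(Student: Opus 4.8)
The plan is to evaluate $a_i$ and $b_i$ by summing the new weight of the children in row $i$ over their parents in row $i-1$, reducing every quantity to values of $\nu$ on shorter strings via Lemma~\ref{ERW}. Each string ending in $1$ in row $i$ is the left child $p1$ of a unique parent $p$ in row $i-1$, and since the letters are independent, $\Pr[S_i=p1]=\alpha\,\Pr[S_{i-1}=p]$; likewise $\Pr[S_i=p0]=(1-\alpha)\Pr[S_{i-1}=p]$. Writing $\Pr[p]$ for $\Pr[S_{i-1}=p]$, this gives
\[a_i=\alpha\sum_{p}\nu(p1)\,\Pr[p],\qquad b_i=(1-\alpha)\sum_{p}\nu(p0)\,\Pr[p],\]
with the sums ranging over all length-$(i-1)$ strings $p$. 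The real work is to rewrite $\nu(p1)$ and $\nu(p0)$ in terms of $\nu$-values of $p$ and its ancestors.

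To that end I would first distill two append-a-letter identities from Lemma~\ref{ERW}. If $T_k$ ends in the letter $x$, then the last earlier occurrence of $x$ is at index $k$, so the $l>0$ branch of the lemma collapses to one term and
\[\nu(T_kx)=\nu(T_k).\]
If instead $T_k$ ends in $x$ and $y$ is the opposite letter, then deleting the final $x$ removes no $y$, so $y$ has the same last occurrence $l\le k-1$ in $T_k$ and in its parent $T_{k-1}$; comparing the two summation ranges in Lemma~\ref{ERW} yields
\[\nu(T_ky)=\nu(T_{k-1}y)+\nu(T_k),\]
and a direct check shows this also holds in the degenerate case where $y$ is absent (both sides use the length branch, with $\nu(T_k)=1$).

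With these in hand the computation is mechanical. In $a_i=\alpha\sum_p\nu(p1)\Pr[p]$ I would split the parents by their last letter: for $p$ ending in $1$ the first identity gives $\nu(p1)=\nu(p)$, while for $p=p'0$ the second gives $\nu(p1)=\nu(p'1)+\nu(p)$. Regrouping,
\[a_i=\alpha\sum_{p}\nu(p)\,\Pr[p]\;+\;\alpha\sum_{p=p'0}\nu(p'1)\,\Pr[p].\]
The first sum is the total new weight of row $i-1$, namely $a_{i-1}+b_{i-1}$. In the second sum $\Pr[p'0]=(1-\alpha)\Pr[S_{i-2}=p']$, so it equals $(1-\alpha)\sum_{p'}\nu(p'1)\Pr[S_{i-2}=p']=\tfrac{1-\alpha}{\alpha}\,a_{i-1}$, since exactly the same set-up gives $a_{i-1}=\alpha\sum_{p'}\nu(p'1)\Pr[S_{i-2}=p']$. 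Hence $a_i=\alpha(a_{i-1}+b_{i-1})+(1-\alpha)a_{i-1}=a_{i-1}+\alpha b_{i-1}$, and the mirror-image computation (swapping $0$ and $1$, and $\alpha$ and $1-\alpha$) delivers $b_i=b_{i-1}+(1-\alpha)a_{i-1}$. The base case $i=1$, where $a_1=\alpha$ and $b_1=1-\alpha$, is immediate.

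I expect the main obstacle to be the second append identity: one must check carefully that removing the final letter of $T_k$ does not disturb the last occurrence of the opposite letter, and confirm the boundary case in which that letter never appears. The only other subtlety is the bookkeeping step that recognizes the leftover sum as $\tfrac{1-\alpha}{\alpha}\,a_{i-1}$, which relies on $\alpha\neq 0$ — valid here because $\alpha\in(0,1)$.
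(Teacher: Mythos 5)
Your proof is correct and follows essentially the same route as the paper: your two append identities are exactly the content of Lemmas~\ref{single} and~\ref{joint} (the paper cites them; you re-derive them from Lemma~\ref{ERW}), and your split of the parents by last letter reproduces the paper's decomposition $a_i=\alpha a_{i-1}+\alpha b_{i-1}+(1-\alpha)a_{i-1}$. Your explicit probability bookkeeping via the factor $\tfrac{1-\alpha}{\alpha}$ is a slightly more formal rendering of the paper's ``one fewer edge labelled $1-\alpha$'' argument, but it is the same computation.
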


\begin{proof}
Consider the left children in row $i$ of $B'$. First consider the left children whose parents are also left children. Each of these has the same value as its parent by Lemma 2.7, so the combined new weight of all of them is $\alpha \cdot a_{i-1}$. Now consider the left children whose parents are right children. By Lemma 2.6, each of these has a value which is the sum of the values of its parent, a right child, and this parent's sibling, a left child. In this way, each right child in row $i-1$ contributes its new weight times $\alpha$ to its left child in row $i-1$. Meanwhile, each left child in row $i-1$ contributes to the left child of its sibling its new weight times $1-\alpha$ since its path to the root contains one fewer edges labelled $1-\alpha$ than the path from the row $i$ left child to the root, but both paths contain the same number of edges labelled $\alpha$. Thus we get:
$$a_i=\alpha  a_{i-1} +\alpha  b_{i-1} + (1-\alpha)a_{i-1}=a_{i-1}+\alpha  b_{i-1}$$
The second recurrence relation follows by the same argument. The right children of row $i-1$ contribute their new weight times $1-\alpha$ to their own right children, and also contribute their new weight times $\alpha$ to the right children of their siblings. Meanwhile, the left children of row $i-1$ contribute their new weight times $1-\alpha$ to their right children, and so we get:
$$b_i=b_{i-1}+(1-\alpha)a_{i-1},$$
which completes the proof.\end{proof}

Now, we solve for $a_i$ as follows:
\begin{align*} \alpha b_{i-1}&=a_i-a_{i-1} \\
\alpha(1-\alpha)a_{i-1}&=\alpha b_i-\alpha b_{i-1} \\
\alpha(1-\alpha)a_{i-1}&=a_{i+1}-a_i -(a_i-a_{i-1}) \\
a_{i+1}&=2a_i-(1-\alpha(1-\alpha))a_{i-1} \end{align*}
The quadratic $x^2=2x-(1-\alpha(1-\alpha))$ has two real solutions: $\Big(1+\sqrt{\alpha(1-\alpha)}\Big)$ and $\Big(1-\sqrt{\alpha(1-\alpha)}\Big)$, so $a_i=c_1\Big(1+\sqrt{\alpha(1-\alpha)}\Big)^i+c_2\Big(1-\sqrt{\alpha(1-\alpha)}\Big)^i$ where $c_1$ and $c_2$ are constants. Inspecting the tree $B'$ gives that $a_1=\alpha$ and $a_2=2\alpha-\alpha^2$, and it is straightforward to verify that the following is an explicit formula of the correct form satisfying the initial conditions:
$$a_i=\frac{1}{2}\Big(\Big(\alpha-\sqrt{\alpha(1-\alpha)}\Big)\Big(1-\sqrt{\alpha(1-\alpha)}\Big)^{i-1}+\Big(\alpha+\sqrt{\alpha(1-\alpha)}\Big)\Big(1+\sqrt{\alpha(1-\alpha)}\Big)^{i-1}\Big)$$
To find an explicit formula for $b_i$, we simply note that if we substitute $a_i$ for $b_i$, $b_i$ for $a_i$, and $\alpha$ for $1-\alpha$, we obtain the recurrence that we just solved. Thus, making the reverse substitutions, we get that 
$$b_i=\frac{1}{2}\Big(\Big(1-\alpha-\sqrt{\alpha(1-\alpha)}\Big)\Big(1-\sqrt{\alpha(1-\alpha)}\Big)^{i-1}+\Big(1-\alpha+\sqrt{\alpha(1-\alpha)}\Big)\Big(1+\sqrt{\alpha(1-\alpha)}\Big)^{i-1}\Big)$$
and combining these two expressions gives the total new weight in row $i$ as
$$a_i+b_i=\frac{1}{2}\Big(\Big(1-2\sqrt{\alpha(1-\alpha)}\Big)\Big(1-\sqrt{\alpha(1-\alpha)}\Big)^{i-1}+\Big(1+2\sqrt{\alpha(1-\alpha)}\Big)\Big(1+\sqrt{\alpha(1-\alpha)}\Big)^{i-1}\Big)$$
As suggested by (\ref{crux}), the final step will be to find the sum $\displaystyle \sum_{i=0}^n (a_i +b_i)$. This follows from the geometric sum formula, and we get

\resizebox{\hsize}{!}{$\displaystyle \sum_{i=0}^n (a_i +b_i)=\frac{\Big(\Big(1-2\sqrt{\alpha(1-\alpha)}\Big)\Big(1-\Big(1-\sqrt{\alpha(1-\alpha)}\Big)^n\Big)+\Big(1+2\sqrt{\alpha(1-\alpha)}\Big)\Big(\Big(1+\sqrt{\alpha(1-\alpha)}\Big)^n-1\Big)}{2{\sqrt{\alpha(1-\alpha)}}}$}

We have now derived the main theorem of this section.

\begin{thm} Suppose Pr$[s_i=1]=\alpha \in [0,1]$ for all $1 \le i \le n$. Then we have

\resizebox{\hsize}{!}{$\phi(S_n)=
\begin{cases}
n &\mbox{if } \alpha=0,1,
\\
\frac{\big(1-2\sqrt{\alpha(1-\alpha)}\big)\big(1-\big(1-\sqrt{\alpha(1-\alpha)}\big)^n\big)+\big(1+2\sqrt{\alpha(1-\alpha)}\big)\big(\big(1+\sqrt{\alpha(1-\alpha)}\big)^n-1\big)}{2{\sqrt{\alpha(1-\alpha)}}} &\mbox{if } \alpha \neq 0,1.
\end{cases}
$} \end{thm}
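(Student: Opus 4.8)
Because the computational scaffolding --- the tree $B'$, Lemmas \ref{joint} and \ref{single}, the recurrences for $a_i$ and $b_i$, and their explicit solution --- is already in place, my plan is to assemble these pieces rather than rederive anything. The degenerate case $\alpha \in \{0,1\}$ is immediate: the first proposition of this section gives $\phi(S_n) = n$ deterministically, hence $E[\phi(S_n)] = n$, which is the first branch of the formula. So I would dispose of it in a single sentence and assume $\alpha \in (0,1)$ for the rest.

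For $\alpha \in (0,1)$ I would begin from identity~\eqref{crux}, which writes $E[\phi(S_n)]$ as the double sum $\sum_{i=1}^n \sum_{j=1}^{2^i} B'_{i,j}\,p_{i,j}$. The key reduction is to identify the inner sum: for each fixed $i \ge 1$, every node of row $i$ of $B'$ is either a left child or a right child, so $\sum_{j=1}^{2^i} B'_{i,j}\,p_{i,j}$ is exactly the total new weight of row $i$, which by the definitions of $a_i$ and $b_i$ equals $a_i + b_i$. This collapses the double sum to the single sum $E[\phi(S_n)] = \sum_{i=1}^n (a_i + b_i)$. I would then insert the already-derived closed form for $a_i+b_i$ (in terms of $s := \sqrt{\alpha(1-\alpha)}$) and evaluate the geometric series
\[
\sum_{i=1}^n \tfrac{1}{2}\big((1-2s)(1-s)^{i-1} + (1+2s)(1+s)^{i-1}\big)
\]
by the standard formula; this produces precisely the second branch of the stated expression, with the factors $1-(1-s)^n$ and $(1+s)^n - 1$ over $2s$.

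The one place that requires genuine care --- and the step I would treat as the main obstacle --- is the index bookkeeping at the bottom of the range. The quantities $a_i + b_i$ are by definition the total new weights of rows of $B'$, and row $0$ (the root alone, which is neither a left nor a right child) contributes nothing, so the correct running total is $\sum_{i=1}^n (a_i+b_i)$. One must therefore sum the closed-form expression starting at $i=1$: it is the right formula for $a_i+b_i$ only for $i \ge 1$ and need not agree with the genuine row-$0$ weight, so an off-by-one here would introduce a spurious additive term and corrupt the exponents in the final answer. Everything else is routine algebra, and since $\alpha \in (0,1)$ forces $s > 0$, the division by $2s$ in the closed form is legitimate.
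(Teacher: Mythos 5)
Your proposal is correct and follows essentially the same route as the paper: collapse the double sum in~\eqref{crux} to $\sum_i (a_i+b_i)$, substitute the closed form, and evaluate the geometric series. Your attention to the lower summation index is well placed --- the paper's displayed sum is written as $\sum_{i=0}^{n}(a_i+b_i)$ but is in fact evaluated starting from $i=1$, exactly as you do, so your bookkeeping matches the formula actually proved.
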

Since this formula is rather unwieldy for $\alpha \in (0,1)$, we also give the following asymptotic result.

\begin{cor} Suppose Pr$[s_i=1]=\alpha \in (0,1)$ for all $1 < i < n$. Then there exists a constant $k$ such that
$$\phi(S_n) \sim k \big(1+\sqrt{\alpha(1-\alpha)}\big)^n.$$\end{cor}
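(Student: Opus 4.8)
The plan is to read off the asymptotics directly from the closed form for $E[\phi(S_n)]$ established in Theorem 2.9, since the corollary is nothing more than an asymptotic simplification of that exact expression. To streamline matters I would first abbreviate $\beta := \sqrt{\alpha(1-\alpha)}$, so that the formula reads
\[
E[\phi(S_n)]=\frac{(1-2\beta)\big(1-(1-\beta)^n\big)+(1+2\beta)\big((1+\beta)^n-1\big)}{2\beta}.
\]
The essential observation is that for $\alpha\in(0,1)$ we have $0<\alpha(1-\alpha)\le \tfrac14$, hence $0<\beta\le\tfrac12$; in particular $0<1-\beta<1<1+\beta$. This chain of inequalities is exactly what separates the two competing exponential growth rates appearing in the numerator.

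Next I would isolate the dominant term. Since $0<1-\beta<1$, the quantity $(1-\beta)^n\to 0$, while $(1+\beta)^n\to\infty$ because $1+\beta>1$. Thus every contribution except the one carrying $(1+\beta)^n$ is either bounded or vanishing, and the whole expression is governed by $\frac{1+2\beta}{2\beta}(1+\beta)^n$. Setting $k:=\frac{1+2\beta}{2\beta}=\frac{1+2\sqrt{\alpha(1-\alpha)}}{2\sqrt{\alpha(1-\alpha)}}$, I would then divide the closed form by $k(1+\beta)^n$ and let $n\to\infty$.

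Carrying out that division, the piece $\frac{(1-2\beta)(1-(1-\beta)^n)}{2\beta(1+\beta)^n}$ tends to $0$, because its numerator converges to the constant $1-2\beta$ while its denominator grows without bound; and the piece $\frac{(1+2\beta)((1+\beta)^n-1)}{2\beta(1+\beta)^n}$ equals $\frac{1+2\beta}{2\beta}\big(1-(1+\beta)^{-n}\big)$, which converges to $\frac{1+2\beta}{2\beta}=k$. Hence $E[\phi(S_n)]/\big(k(1+\beta)^n\big)\to 1$, which is precisely the claimed asymptotic equivalence.

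I do not expect any genuine obstacle here: the argument is a routine limit computation once the exact formula is in hand, and the only point requiring a moment's care is verifying the strict inequality $\beta<1$ (equivalently $1-\beta>0$) so that $(1-\beta)^n$ truly decays rather than persisting or growing. All of the real difficulty was already absorbed into deriving the closed form in Theorem 2.9, so the corollary is best viewed as a clean restatement of that theorem in a more digestible form.
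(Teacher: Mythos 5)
Your proposal is correct and follows exactly the same route as the paper: both take the closed form from Theorem 2.9, observe that $\big(1-\sqrt{\alpha(1-\alpha)}\big)^n\to 0$, and identify the limiting constant $k=\frac{1+2\sqrt{\alpha(1-\alpha)}}{2\sqrt{\alpha(1-\alpha)}}$ by dividing through by $\big(1+\sqrt{\alpha(1-\alpha)}\big)^n$. Your write-up is simply a more detailed version of the paper's limit computation.
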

\begin{proof}
We take the limit of the quantity in Theorem 2.9. Since $\big(1-\sqrt{\alpha(1-\alpha)}\big)<1$, it follows that $\displaystyle \lim_{n \rightarrow \infty} \big(1-\sqrt{\alpha(1-\alpha)}\big)^n =0$. Therefore 
$$\displaystyle \lim_{n \rightarrow \infty} \frac{\phi(S_n)}{\big(1+\sqrt{\alpha(1-\alpha)}\big)^n}\cdot \frac{2\sqrt{\alpha(1-\alpha)}}{\big(1+2\sqrt{\alpha(1-\alpha)}\big)}=1,$$
as asserted.\end{proof}
\section{Variations: Larger Alphabets, Markov Chains, and Growth Rates}
In the previous section, we looked at strings on a binary alphabet generated by a random process in which the probability that any given element was 1 was fixed at $\alpha$. In this section, we generalize this in two ways. First, we consider strings on the alphabet $\{1,2,...,d\}=[d]$ where each letter is independently $j$ with probability $\alpha_j$ for all $j \in [d]$. After that, we return to binary strings, but this time they will be generated according to a two-state Markov chain; in particular, if a letter follows a 1, then it is 1 with probability $\alpha$, but if it follows a 0, then it is 1 with probability $\beta$. In both these cases, we will find recurrences for the expected new weight contributed by the $n\textsuperscript{th}$ letter, which will lead to explicit matrix equations for that expected new weight. Unfortunately, we will not be able to find a closed-form formula for the total expected number of subsequences like we did in Section \ref{main}.
\subsection{A Larger Alphabet}
In this section, we consider strings on the alphabet $[d]$. In Section \ref{main}, $T_n$ was a fixed length-$n$ string on the alphabet $\{0,1\}$; here we let $T_n$ be a fixed length-$n$ string on the alphabet $[d]$. Similarly, $S_n$ is now a random length-$n$ string on the alphabet $[d]$ where, independently, Pr$[s_i=j] = \alpha_j$ for all $i \in [n] ,j \in [d]$ (note that $\sum_{j=1}^d \alpha_j = 1$).

We begin by stating a generalization of Lemma \ref{ERW}. The first paragraph of the proof is the same as in the proof of the original lemma, but is included for completeness.
\begin{lemma}\label{genERW}
Given $T_n$, let $l$ be the greatest number less than $n$ such that $t_l=t_n$, and if no such number exists, let $l=0$. Then,
\\
$$\nu(T_n)=
\begin{cases}
\displaystyle\sum_{i=1}^{n-1} \nu(T_i) + 1 &\mbox{if } l=0,
\\
\displaystyle \sum_{i=l}^{n-1} \nu(T_i) &\mbox{if } l>0.
\end{cases}$$
\end{lemma}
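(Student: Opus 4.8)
The plan is to mirror the proof of Lemma~\ref{ERW}, isolating the one case that genuinely changes. For $l>0$ nothing in the earlier argument used that the alphabet was binary: a new subsequence must end with the letter $t_n$, and its truncation (everything but that last letter) is new in exactly one of $T_l,\dots,T_{n-1}$. So I would reproduce that argument essentially verbatim to obtain $\nu(T_n)=\sum_{i=l}^{n-1}\nu(T_i)$, and concentrate the real work on the case $l=0$.

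For $l=0$ the letter $t_n$ occurs nowhere in $T_{n-1}$. The key observation I would establish is that a subsequence of $T_n$ is new precisely when it uses position $n$: if it avoids position $n$ it already lies in $T_{n-1}$, whereas if it uses position $n$ then its final letter is $t_n$, a letter absent from $T_{n-1}$, so it cannot be a subsequence of $T_{n-1}$. Hence the new subsequences are exactly the strings $U t_n$ with $U \preceq T_{n-1}$ (allowing $U$ to be empty, which yields the one-letter subsequence $t_n$), and distinct choices of $U$ give distinct new subsequences. Counting these amounts to counting the subsequences of $T_{n-1}$ together with the empty subsequence, so $\nu(T_n)=\phi(T_{n-1})+1$.

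To finish I would rewrite $\phi(T_{n-1})$ as a sum of $\nu$'s. Since every distinct subsequence of a string is new at exactly one truncation --- the same telescoping identity already invoked in~(\ref{crux}) --- we have $\phi(T_{n-1})=\sum_{i=1}^{n-1}\nu(T_i)$, giving $\nu(T_n)=\sum_{i=1}^{n-1}\nu(T_i)+1$ as claimed. As a sanity check I would confirm that this collapses to the binary statement $\nu(T_n)=n$: when $l=0$ in the binary setting every $T_i$ is constant, so $\nu(T_i)=1$ and the sum is $(n-1)+1=n$.

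The only delicate point, and the one I would be most careful about, is the bookkeeping of the empty subsequence: the ``$+1$'' in the statement is exactly the contribution of the empty prefix $U$, and one must check that $\phi$ is taken \emph{not} to count the empty subsequence, so that this single ``$+1$'' appears once rather than being absorbed into $\phi(T_{n-1})$ or double-counted. Everything else is a direct transcription of the binary argument.
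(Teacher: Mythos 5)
Your proof is correct and follows essentially the same route as the paper's: the $l>0$ case is the verbatim binary argument, and for $l=0$ the paper likewise identifies the new subsequences with $Ut_n$ for $U\preceq T_{n-1}$ (or $U$ empty), yielding $\phi(T_{n-1})+1=\sum_{i=1}^{n-1}\nu(T_i)+1$ with the same care about the empty subsequence contributing the ``$+1$''.
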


\begin{proof}Suppose $l>0$ and $U_k$ is a subsequence of length $k$ in $T_n$. We assume that the last letter in $U_k$ is $t_n$ because otherwise $U_k$ is clearly not new. If $U_{k-1} \preceq T_{l-1}$, then we could use $t_l$ to complete $U_k$, so $U_k \preceq T_l$ and $U_k$ would not be new in $T_n$. Conversely, if $U_{k-1} \not\preceq T_{l-1}$, then $U_k$ cannot be completed by $t_l$, nor can it be completed by any other $t_i$ before $t_n$, so $U_k$ is a new subsequence in $T_n$. Therefore, there is one distinct new subsequence in $T_n$ for every distinct new subsequence found in some $T_i$ with $l \le i \le n-1$. Counting up all of those distinct new subsequences gives the number of distinct new subsequences in $T_n$.

Now suppose $l=0$ and $U_k$ is a subsequence of length $k$ in $T_n$. As before, we can assume that the last letter of $U_k$ is $t_n$, but this time every subsequence of length ending in $t_n$ is new in $T_n$. Therefore, there is nearly a bijection between the subsequence which appeared by $T_{n-1}$ and the ones which are new in $T_n$ given by mapping a subsequence appearing by $T_{n-1}$ to itself with $t_n$ appended. This is not exactly a bijection because the single-element sequence $t_n$ is a new subsequence in $T_n$ even though the empty set is not counted as a subsequence in $T_{n-1}$. Therefore, the number of new subsequences is the sum of the number of subsequences which have appeared previously, plus one.
\end{proof}

Just like in the previous section, we are interested in the expected new weight of $S_n$. As before, we find it useful to refer to a tree $B$, but in this case it is a $d$-ary tree where the root is labelled with the empty string, and the $j\textsuperscript{th}$ child of a node is labelled with the parent's label with $j$ appended. The following figure shows rows 0-2 and part of row 3 of $B$ when $d=3$.

\begin{center}
\begin{tikzpicture}[every tree node/.style={draw=black!0,circle},sibling distance=.25cm]
\matrix{
\Tree
 [ .$()$
    [ .3 
    	[ .33 333 332 331 ]
	[ .32 323 322 321 ] 
	[ .31 ]
   ]
    [ .2 23 22 21 ]
    [ .1 13 12 11 ]
  ]
 
\\};
\end{tikzpicture}
\end{center}

Again like in the previous section, we will also use the tree $B'$ where each node $T_n$ of $B$ has been replaced by $\nu(T_n)$.

\begin{center}
\begin{tikzpicture}[every tree node/.style={draw=black!0,circle},sibling distance=.25cm]
\matrix{
\Tree
 [ .0
    [ .1
    	[ .1 1 3 3 ]
	[ .2 3 2 4 ] 
	[ .2 ]
   ]
    [ .1 2 1 2 ]
    [ .1 2 2 1 ]
  ]
 
\\};
\end{tikzpicture}
\end{center}

We now need to generalize Lemmas \ref{joint} and \ref{single}.

\begin{defn} A $(j,k)$-grandchild is a string whose final two letters are $j$ and $k$; equivalently, a $(j,k)$-grandchild is a string which is formed by appending $jk$ to its grandparent.
\end{defn}

\begin{lemma}\label{genjoint} Suppose $n \ge 2$ and $T_n$ is a $(j,k)$-grandchild with $j \neq k$ whose grandparent is $T_{n-2}$; then $\nu(T_n)=\nu(T_{n-2} j)+\nu(T_{n-2} k). $\end{lemma}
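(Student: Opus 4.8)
The plan is to mimic the structure of the proof of Lemma~\ref{joint}, generalizing from the binary case to the $[d]$-ary case, with the key observation that the argument there never really used that the alphabet had size two---it only used the relationship between a grandchild and its grandparent via Lemma~\ref{ERW}. First I would fix $T_n$ as a $(j,k)$-grandchild with $j \neq k$, so that $T_n = T_{n-2}\,j\,k$ and the two relevant children of $T_{n-2}$ are $T_{n-2}j$ and $T_{n-2}k$. The goal is to compute $\nu(T_{n-2}jk)$ using Lemma~\ref{genERW} and show it equals $\nu(T_{n-2}j) + \nu(T_{n-2}k)$.

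The main case split will be on whether the letter $k$ appears anywhere in $T_{n-2}j$ before position $n$. Since the last letter of $T_n$ is $k$ and $T_n = T_{n-2}jk$, and since $j \neq k$, the penultimate letter $j$ is not equal to $k$; hence the largest index $l < n$ with $t_l = k$ is some index $l \le n-2$ (it lies within $T_{n-2}$) if $k$ occurs in $T_{n-2}$, and otherwise $l = 0$. In the generic case where $k$ does occur in $T_{n-2}$ (so $l > 0$ with $l \le n-2$), Lemma~\ref{genERW} gives
\[
\nu(T_{n-2}jk) = \sum_{i=l}^{n-1} \nu(T_i) = \sum_{i=l}^{n-2}\nu(T_i) + \nu(T_{n-2}j).
\]
I would then recognize the first sum as $\nu(T_{n-2}k)$, again by Lemma~\ref{genERW} applied to the string $T_{n-2}k$, whose last letter $k$ has its previous occurrence at exactly the same index $l$ inside $T_{n-2}$. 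This immediately yields $\nu(T_{n-2}jk) = \nu(T_{n-2}k) + \nu(T_{n-2}j)$, as desired.

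The remaining work is the boundary case where $k$ does not appear in $T_{n-2}$ at all, so that for the string $T_{n-2}k$ the previous-occurrence index is $l = 0$. Here Lemma~\ref{genERW} gives $\nu(T_{n-2}k) = \sum_{i=1}^{n-3}\nu(T_i) + 1$, while for $T_{n-2}jk$ the previous occurrence of $k$ is still absent within $T_{n-2}$ but the letter $j$ at position $n-1$ differs from $k$, so the relevant index for $T_{n-2}jk$ is determined by whether $k$ equals $j$---it does not---meaning $k$'s last prior appearance in $T_{n-1} = T_{n-2}j$ is again nonexistent, giving $l = 0$ and $\nu(T_{n-2}jk) = \sum_{i=1}^{n-2}\nu(T_i) + 1 = \nu(T_{n-2}j) + \sum_{i=1}^{n-3}\nu(T_i) + 1 = \nu(T_{n-2}j) + \nu(T_{n-2}k)$. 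I expect this bookkeeping in the $l=0$ boundary case to be the main obstacle, since the generalized Lemma~\ref{genERW} has the extra ``$+1$'' term that must be tracked carefully so that it appears on exactly one side of the equation and cancels correctly; verifying that the summation ranges align (that $\sum_{i=1}^{n-2}$ splits cleanly as $\sum_{i=1}^{n-3} + \nu(T_{n-2}j)$, using $T_{n-1} = T_{n-2}j$) is where an off-by-one slip is most likely. Once both cases are handled, the two subcases combine to give the stated identity for all $(j,k)$-grandchildren with $j \neq k$.
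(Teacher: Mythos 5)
Your approach is the same as the paper's: apply Lemma~\ref{genERW} to $T_n = T_{n-2}jk$, split off the $i=n-1$ term of the resulting sum as $\nu(T_{n-2}j)$, and recognize the remainder as $\nu(T_{n-2}k)$, with a case split on whether $k$ occurs in $T_{n-2}$. Your generic case ($k$ occurs in $T_{n-2}$) is correct and matches the paper. However, every equality you write in the $l=0$ boundary case is off by one --- exactly the slip you predicted. Lemma~\ref{genERW} applied to the length-$n$ string $T_{n-2}jk$ gives $\nu(T_{n-2}jk)=\sum_{i=1}^{n-1}\nu(T_i)+1$, not $\sum_{i=1}^{n-2}\nu(T_i)+1$; applied to the length-$(n-1)$ string $T_{n-2}k$ it gives $\nu(T_{n-2}k)=\sum_{i=1}^{n-2}\nu(T_i)+1$, not $\sum_{i=1}^{n-3}\nu(T_i)+1$; and the term you peel off from your sum at index $i=n-2$ is $\nu(T_{n-2})$, which is not $\nu(T_{n-2}j)=\nu(T_{n-1})$ in general. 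The three errors happen to compensate so that your final line is the correct identity, but as written each intermediate claim is false. The fix is mechanical: start from $\sum_{i=1}^{n-1}\nu(T_i)+1$, peel off $\nu(T_{n-1})=\nu(T_{n-2}j)$, and identify the remaining $\sum_{i=1}^{n-2}\nu(T_i)+1$ with $\nu(T_{n-2}k)$ --- which is exactly the computation in the paper's proof.
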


\begin{proof}As in Lemma \ref{genERW}, let $l_j$ and $l_k$ be the greatest numbers less than or equal to $n-2$ with $t_{l_j}=j$ and $t_{l_k}=k$ (or 0 if no such $t$ exist). Suppose that $l_k \neq 0$. By Lemma \ref{genERW}, we have that 
\[\nu(T_n)=\sum_{i=l_k}^{n-1} \nu(T_i) = \nu(T_{n-2}j) + \sum_{i=l_k}^{n-2} = \nu(T_{n-2}j) + \nu(T_{n-2}k).\] Otherwise, $l_k = 0$ and by Lemma 3.1, we have that 
\[\nu(T_n)=\sum_{i=1}^{n-1} \nu(T_i) + 1 = \nu(T_{n-2}j) + \sum_{i=1}^{n-2} \nu(T_i) + 1 = \nu(T_{n-2}j) + \nu(T_{n-2}k).\]
This completes the proof.\end{proof}

\begin{lemma}\label{gensingle} Suppose $n \ge 2$ and $T_n$ is a $(j,j)$-grandchild whose grandparent is $T_{n-2}$. Then, $\nu(T_n) = \nu(T_{n-2}j)$.\end{lemma}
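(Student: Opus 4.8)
The plan is to obtain this as an immediate consequence of the generalized Elzinga--Rahmann--Wang recursion (Lemma \ref{genERW}), exploiting the fact that a $(j,j)$-grandchild has its last two letters equal. First I would unpack the hypothesis: since $T_n$ is a $(j,j)$-grandchild with grandparent $T_{n-2}$, we have $T_{n-1}=T_{n-2}j$ and $T_n=T_{n-1}j$, so in particular $t_{n-1}=t_n=j$.

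Next I would identify the index $l$ appearing in Lemma \ref{genERW}, namely the greatest number less than $n$ with $t_l=t_n$. Since $n-1$ is already the largest index below $n$ and $t_{n-1}=j=t_n$, the index is forced to $l=n-1$, which is positive. This is the entire point of the argument: the immediately preceding letter already matches $t_n$, pinning $l$ at its maximal possible value.

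Feeding $l=n-1$ into the $l>0$ branch of Lemma \ref{genERW} collapses the summation to a single term:
$$\nu(T_n)=\sum_{i=n-1}^{n-1}\nu(T_i)=\nu(T_{n-1}).$$
Because $T_{n-1}=T_{n-2}j$, this reads $\nu(T_n)=\nu(T_{n-2}j)$, which is the claim. I expect no genuine obstacle here: in contrast to Lemma \ref{genjoint}, where $j\neq k$ sends $l$ back to some earlier occurrence of $t_n$ and the summation range must be split, the repeated final letter here makes $l=n-1$ outright, so neither case analysis nor manipulation of the sum is required. The only thing to state explicitly is that $n-1$ is trivially the largest index less than $n$, so verifying $l=n-1$ is immediate.
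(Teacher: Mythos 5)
Your proposal is correct and follows the same route as the paper, which simply states that the claim ``follows immediately from Lemma \ref{genERW}'' given $T_n = T_{n-2}jj$; you have merely spelled out the one-line computation (identifying $l = n-1$ and collapsing the sum to the single term $\nu(T_{n-1}) = \nu(T_{n-2}j)$) that the paper leaves implicit.
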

\begin{proof}Since $T_n=T_{n-2}jj$ and $T_{n-1}=T_{n-2}j$, the statement follows immediately from Lemma \ref{genERW}.
\end{proof}

We want to be able to calculate the total new weight in each row of the tree $B'$. As in Section \ref{main} it will be convenient to break up that new weight by the final letter of the string it comes from. For each $n \in \mathbb{Z}^+$ and $j \in [d]$, let $a_{j,n}$ be the total new weight of the length-$n$ strings ending in $j$. Using Lemmas \ref{genjoint} and \ref{gensingle} we find that a typical element $T_{n-2}j$ contributes its weight times $\alpha_k$ to $T_{n-2}jk$ (for all $k \in [d]$) and also contributes its weight times $\alpha_k$ to $T_{n-2}kj$ for all $k \in [d] \setminus \{j\}$. Therefore, we obtain the recurrences
\[a_{j,n}=\sum_{k=1}^d \alpha_j a_{k,n-1} + \sum_{k \neq j} \alpha_k a_{j,n-1} = a_{j,n-1} + \sum_{k \neq j} \alpha_j a_{k,n-1}.\]

Using these $d$ recursions and the fact that the $a_{j,1}=\alpha_j$ for all $j$, we find that the vector $[a_{1,n},a_{2,n},...,a_{d,n}]^T$ is given by the matrix equation:
\[\begin{bmatrix}
a_{1,n}
\\
a_{2,n}
\\
a_{3,n}
\\
\vdots
\\
a_{d,n}
\end{bmatrix}
=
\begin{bmatrix}
1 & \alpha_1 & \alpha_1 & \dots & \alpha_1
\\
\alpha_2 & 1 & \alpha_2 & \dots & \alpha_2
\\
\alpha_3 & \alpha_3 & 1 & \dots & a_3
\\
 & & & \ddots
\\
\alpha_d & \alpha_d & \alpha_d & \dots & 1
\end{bmatrix}^{n-1}
\begin{bmatrix}
\alpha_1
\\
\alpha_2
\\
\alpha_3
\\
\vdots
\\
\alpha_d
\end{bmatrix},\]
and therefore the total new weight in row $n$ is 
\[\begin{bmatrix}
1 & 1 & 1 & \dots & 1
\end{bmatrix}
\begin{bmatrix}
1 & \alpha_1 & \alpha_1 & \dots & \alpha_1
\\
\alpha_2 & 1 & \alpha_2 & \dots & \alpha_2
\\
\alpha_3 & \alpha_3 & 1 & \dots & a_3
\\
 & & & \ddots
\\
\alpha_d & \alpha_d & \alpha_d & \dots & 1
\end{bmatrix}^{n-1}
\begin{bmatrix}
\alpha_1
\\
\alpha_2
\\
\alpha_3
\\
\vdots
\\
\alpha_d
\end{bmatrix}.\]
Therefore, we have a way to compute the expected value of $\phi(S_n)$ in this general alphabet case, as expressed in the following theorem.
\begin{thm} Let $S_n$ be a random length-$n$ string on the alphabet $[d]$ where Pr$[s_i=j]=\alpha_j$ for all $i,j$. Then,
\[E[\phi(S_n)]= \begin{bmatrix}
1 & 1 & 1 & \dots & 1
\end{bmatrix}
\left(\sum_{i=0}^{n-1}
\begin{bmatrix}
1 & \alpha_1 & \alpha_1 & \dots & \alpha_1
\\
\alpha_2 & 1 & \alpha_2 & \dots & \alpha_2
\\
\alpha_3 & \alpha_3 & 1 & \dots & a_3
\\
 & & & \ddots
\\
\alpha_d & \alpha_d & \alpha_d & \dots & 1
\end{bmatrix}^{i}
\right)
\begin{bmatrix}
\alpha_1
\\
\alpha_2
\\
\alpha_3
\\
\vdots
\\
\alpha_d
\end{bmatrix}.
\]
\end{thm}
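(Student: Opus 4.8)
The plan is to reduce the statement to the per-row new-weight computation that the preceding discussion has just carried out, mirroring equation~(\ref{crux}) from the binary case. First I would record that the decomposition
\[
\phi(S_n)=\sum_{i=1}^{n}\nu(S_i)
\]
remains valid over the alphabet $[d]$: every nonempty subsequence of $S_n$ is contained in $S_i$ for a unique smallest index $i$, and at that index it is new, so it is counted once by $\nu(S_i)$ and by no other term. This reasoning is purely combinatorial and never uses that the alphabet is binary. The only arithmetic difference from Section~\ref{main}, namely the ``$+1$'' appearing when $l=0$ in Lemma~\ref{genERW}, is exactly the contribution of the singleton subsequence $t_n$ that first appears at step $n$, so it is consistent with—rather than an exception to—the ``new exactly once'' principle.

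Taking expectations and invoking linearity then gives
\[
E[\phi(S_n)]=\sum_{i=1}^{n}E[\nu(S_i)].
\]
Next I would identify each $E[\nu(S_i)]$ with the total new weight in row $i$ of $B'$. By the definition of new weight,
\[
E[\nu(S_i)]=\sum_{T_i}\nu(T_i)\,\Pr[S_i=T_i]=\sum_{j=1}^{d}a_{j,i},
\]
the last equality being the grouping of length-$i$ strings by their final letter. The recurrences $a_{j,i}=a_{j,i-1}+\sum_{k\neq j}\alpha_j a_{k,i-1}$, obtained from Lemmas~\ref{genjoint} and~\ref{gensingle}, together with $a_{j,1}=\alpha_j$, have already been packaged as the matrix identity $[a_{1,i},\dots,a_{d,i}]^{T}=M^{\,i-1}[\alpha_1,\dots,\alpha_d]^{T}$, where $M$ is the $d\times d$ matrix with $1$ on the diagonal and $\alpha_j$ in every off-diagonal entry of its $j$th row. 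Writing $\mathbf{1}=[\,1\ \cdots\ 1\,]$ for the all-ones row vector of length $d$ and $\mathbf{v}=[\alpha_1,\dots,\alpha_d]^{T}$, this yields $E[\nu(S_i)]=\mathbf{1}\,M^{\,i-1}\mathbf{v}$.

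Finally I would sum over $i$. Since $\mathbf{1}$ and $\mathbf{v}$ do not depend on $i$, distributivity of matrix multiplication lets me pull them outside the sum:
\[
E[\phi(S_n)]=\sum_{i=1}^{n}\mathbf{1}\,M^{\,i-1}\mathbf{v}
=\mathbf{1}\Big(\sum_{i=1}^{n}M^{\,i-1}\Big)\mathbf{v}
=\mathbf{1}\Big(\sum_{i=0}^{n-1}M^{\,i}\Big)\mathbf{v},
\]
which is the asserted formula after reindexing. I do not expect a hard step here, since the substantive combinatorics lives in Lemmas~\ref{genjoint} and~\ref{gensingle} and in the derivation of the matrix recurrence. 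The one point I would flag as the genuine obstacle is the verification that the ``new exactly once'' decomposition transfers verbatim to arbitrary $d$—in particular that the boundary term of Lemma~\ref{genERW} neither double-counts nor omits the singleton subsequence; once that is confirmed the theorem is immediate, and, as the authors note, no further simplification of $\sum_{i=0}^{n-1}M^{i}$ is available in general because $I-M$ need not be invertible.
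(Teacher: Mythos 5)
Your proposal is correct and follows essentially the same route as the paper, which presents the theorem as an immediate consequence of the decomposition $\phi(S_n)=\sum_{i=1}^n\nu(S_i)$ (equation~(\ref{crux})), the identification of $E[\nu(S_i)]$ with the row-$i$ new weight $\sum_j a_{j,i}$, and the matrix recurrence derived from Lemmas~\ref{genjoint} and~\ref{gensingle}. Your explicit verification that the ``new exactly once'' principle and the $+1$ boundary term of Lemma~\ref{genERW} are consistent is a worthwhile detail the paper leaves implicit, but it is not a departure from their argument.
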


\subsection{Strings from Markov Processes}
We now return to binary strings, but now the probability of seeing a particular letter will depend on the letter before (we assume that strings are generated from left to right). In the random string $S_n$, Pr$[s_i=1 | s_{i-1}=1] = \alpha$ and Pr$[s_i = 1 | s_{i-1}=0]= \beta$. Of course, we will need some other rule for Pr$[s_1=1]$; one logical choice is to take Pr$[s_1=1]=\gamma$ where $\gamma$ is the steady-state probability of a 1 occurring, which in this case gives $\gamma=\frac{\beta}{1+\beta-\alpha}$. Unlike in the previous subsection, we don't need any new lemmas to discuss this case. Instead, the key will be to categorize the new weight in each row by the last \emph{two} letters of the strings which contribute it, rather than just by the last single letter. 

To that end, let $a_n$ be the total new weight of $n$-letter strings ending in 11, let $b_n$ be the total new weight of $n$-letter strings ending in 10, let $c_n$ be the total new weight of $n$ letter strings ending in 01, and let $d_n$ be the total new weight of $n$-letter strings ending in 00. We apply Lemmas \ref{joint} and \ref{single} in much the same manner that we did in Section \ref{main} to find the recurrences:
\[a_{n+1}=\alpha(a_n+c_n);\]
\[b_{n+1}=(1-\alpha)(a_n+c_n)+\alpha b_n +\frac{\beta(1-\alpha)}{1-\beta} d_n;\]
\[c_{n+1}=\beta(b_n+d_n)+(1-\beta)c_n+\frac{(1-\alpha)\beta}{\alpha} a_n;\]
\[d_{n+1}=(1-\beta)(b_n+d_n).\]
Again, these recurrences lead to a matrix equations
\[\begin{bmatrix}
a_n
\\
b_n
\\
c_n
\\
d_n
\end{bmatrix}
=
\begin{bmatrix}
\alpha & 0 & \alpha & 0
\\
1-\alpha & \alpha & 1-\alpha & \frac{\beta(1-\alpha)}{1-\beta}
\\
\frac{(1-\alpha)\beta}{\alpha} & \beta & 1-\beta & \beta
\\
0 & 1-\beta & 0 & 1- \beta
\end{bmatrix}^{n-1}
\begin{bmatrix}
a_1
\\
b_1
\\
c_1
\\
d_1
\end{bmatrix},
\] and

\[E[\phi(S_n)]= \begin{bmatrix}
1 & 1 & 1 & \dots & 1
\end{bmatrix}
\left(\sum_{i=0}^{n-1}
\begin{bmatrix}
\alpha & 0 & \alpha & 0
\\
1-\alpha & \alpha & 1-\alpha & \frac{\beta(1-\alpha)}{1-\beta}
\\
\frac{(1-\alpha)\beta}{\alpha} & \beta & 1-\beta & \beta
\\
0 & 1-\beta & 0 & 1- \beta
\end{bmatrix}^{i-1}
\right)
\begin{bmatrix}
a_1
\\
b_1
\\
c_1
\\
d_1
\end{bmatrix}.
\]
While we could define Pr$[s_1=1]=\gamma$ (recalling that $\gamma = \frac{\beta}{1+\beta-\alpha}$) it will make our formula work out more nicely if we pretend that there exists a letter $s_0$ which does not add to any subsequences but which determines Pr$[s_1=1]$. If we take Pr$[s_0 = 1]=\gamma$, then we have $a_1=\gamma\alpha, b_1 = \gamma(1-\alpha), c_1 = (1-\gamma)\beta, d_1 = (1-\gamma)(1-\beta)$, and so the definition of the formula is complete.
\subsection{Exponential Growth via Fekete's Lemma}  In this subsection, we exhibit the fact that in the case of general alphabets, the expected number of distinct sequences is ``around" $c^n$, mirroring  the result in Corollary 2.10.  This fact is hardly surprising, but raises other questions, namely as to whether the ``true" numbers contain, additionally, polynomial factors as do several Stanley-Wilf limits in the theory of pattern avoidance (note that there are no polynomial factors in Corollary 2.10).

 Also, in general the existence of limits is not automatic, as seen by the following example:  Assume that $n$ balls are independently thrown into an infinite array of boxes so that box $j$ is hit with probability $1/2^j$ for $j=1,2,\ldots$.  Let $\pi_n$ be the probability that the largest occupied box has a single ball in it.  Then, as seen in \cite{athreya}, $\lim_{n\to\infty} \pi_n$ does not exist, and $\lim\sup_{n\to\infty}\pi_n$ and $\lim\inf_{n\to\infty}\pi_n$ differ in the fourth decimal place! Such behavior does not however occur in our context, as we prove next.
\begin{thm}  Let $s_1,s_2,\ldots $ be a sequence of independent and identically distributed random variables with $Pr(s_1=j)=\alpha_j, j=1,2,\ldots,d$, and $\sum_j\alpha_j=1$.  Set $\alpha=(\alpha_1,\ldots,\alpha_d)$.  Let $\phi(S_n)$ and $\phi(S_{n+1,n+m})$ be the number of distinct subsequences in $S_n=(s_1,\ldots,s_n)$ and $(s_{n+1},\ldots,s_{n+m})$.  Let $\psi(n)=E(\phi(S_n))$.  Then there exists $c=c_{d,\alpha}\ge 1$ such that
\[\psi(n)^{1/n}\to c; n\to\infty,\] where $c=1$ iff $d\ge 1$ and $\max_j \alpha_j=1$.
\end{thm}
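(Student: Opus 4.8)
The plan is to establish existence of $c$ by a submultiplicativity (Fekete) argument and then to characterize the case $c=1$ separately. Throughout, write $\hat\phi(A)=\phi(A)+1$ for the number of distinct subsequences of a string $A$ counting the empty subsequence, and set $\hat\psi(n)=E[\hat\phi(S_n)]=\psi(n)+1$. The crucial combinatorial step is the inequality
\[\hat\phi(S_{n+m}) \le \hat\phi(S_n)\cdot\hat\phi(s_{n+1}\cdots s_{n+m}).\]
To see this, take any distinct subsequence $w$ of $S_{n+m}$ and fix one realization of it by indices $i_1<\cdots<i_k$; splitting at $n$ writes $w=uv$, where $u$ is the subsequence given by the indices $\le n$ (a subsequence of $S_n$) and $v$ the one given by the indices $>n$ (a subsequence of the second block). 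The assignment $w\mapsto(u,v)$ is injective, since $w=uv$ recovers $w$ from the pair, so the number of distinct $w$ is at most the number of admissible pairs, namely $\hat\phi(S_n)\,\hat\phi(s_{n+1}\cdots s_{n+m})$. Note that empty $u$ or empty $v$ must be allowed, which is precisely why $\hat\phi$ rather than $\phi$ is the correct bookkeeping quantity here.

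Next I would take expectations. Since $S_n$ and the block $s_{n+1}\cdots s_{n+m}$ are functions of disjoint and hence independent collections of the i.i.d.\ variables, the expectation of the product factors, and as the second block is distributed as $S_m$ we get $\hat\psi(n+m)\le\hat\psi(n)\hat\psi(m)$. Thus $\log\hat\psi$ is subadditive and nonnegative, so Fekete's lemma yields $\tfrac1n\log\hat\psi(n)\to\inf_m\tfrac1m\log\hat\psi(m)=:\log c$, i.e.\ $\hat\psi(n)^{1/n}\to c$. Because a length-$n$ string has at least one distinct subsequence of each length $1,\dots,n$, we have $\psi(n)\ge n\to\infty$, so $(1-1/\hat\psi(n))^{1/n}\to1$ and therefore $\psi(n)^{1/n}\to c$ as well; moreover $\psi(n)^{1/n}\ge n^{1/n}\to1$ forces $c\ge1$.

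It remains to show $c=1$ iff $\max_j\alpha_j=1$. If $\max_j\alpha_j=1$ then $S_n$ is almost surely constant, so $\phi(S_n)=n$ and $\psi(n)=n$, giving $c=\lim n^{1/n}=1$. For the converse I would argue by contraposition: assuming $\max_j\alpha_j<1$, choose a letter $a$ with $0<\alpha_a<1$ (one exists), and let $f$ collapse the alphabet by sending $a\mapsto1$ and every other letter to $0$. Applied letterwise, $f$ maps subsequences of $S_n$ \emph{onto} subsequences of $f(S_n)$, so $\phi(f(S_n))\le\phi(S_n)$ pointwise, while $f(S_n)$ is an i.i.d.\ binary string with parameter $\alpha_a$. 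Taking expectations and invoking Corollary 2.10, $\psi(n)\ge E[\phi(f(S_n))]\sim k\big(1+\sqrt{\alpha_a(1-\alpha_a)}\big)^n$, whence $c\ge 1+\sqrt{\alpha_a(1-\alpha_a)}>1$.

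The main obstacle is the submultiplicative step: proving the concatenation inequality with the correct empty-subsequence accounting (so that $\hat\phi$, and the shifted quantity $\hat\psi=\psi+1$, is what appears) and justifying the factorization of the expectation through independence of the two blocks. Once submultiplicativity is in hand the existence of $c$ follows immediately from Fekete's lemma, and the sharp characterization of when $c=1$ reduces to the already-solved binary case of Section \ref{main} via the alphabet-collapsing map $f$.
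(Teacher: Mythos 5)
Your proposal is correct and follows essentially the same route as the paper: a splitting map $w\mapsto(u,v)$ at position $n$ giving submultiplicativity, Fekete's lemma for the existence of $c$, and a reduction to the binary case of Section \ref{main} by collapsing the alphabet to characterize $c=1$. In fact your bookkeeping with $\hat\phi=\phi+1$ to account for empty components of the pair is slightly more careful than the paper's, which states $\psi(n+m)\le\psi(n)\psi(m)$ directly without the $+1$ correction.
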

\begin{proof} As in Arratia's paper \cite{arratia} on the existence of Stanley-Wilf limits, we employ subadditivity arguments and Fekete's lemma.  Assume without loss of generality that $m\le n$.  Let $\eta$ be a distinct subsequence of $S_{m+n}$, and consider the first lexicographic occurrence of $\eta$, say $\eta_f$.  Then $\eta_f\vert_{S_n}=\eta_{f_1}$ and $\eta_f\vert_{(s_{n+1},\ldots,s_{n+m})}=\eta_{f_2}$ are a pair of subsequences of $(s_1,\ldots,s_n)$ and $(s_{n+1},\ldots,s_{n+m})$.  Moreover, the map
\[\eta_f\longrightarrow(\eta_{f_1},\eta_{f_2})\] is one-to-one (note that one of the components $\eta_{f_1},\eta_{f_2}$ may be empty).  Thus
\[S_{n+m}\le S_nS_{n+1,n+m},\]
and thus, by independence we get
\[\psi(n+m)\le\psi(n)\psi(n+1,n+m).\]Since $m\le n$, we conclude that
\[\psi(n+m)\le\psi(n)\psi(m).\]
In other words, $\xi(n)=\log \psi(n)$ satisfies the subadditivity condition
\[\xi(n+m)\le \xi(n)+\xi(m),\] and Fekete's lemma yields the conclusion that
\[\frac{\xi(n)}{n}\to \ell=\inf_{n\ge1}\frac{\xi(n)}{n}.\]
Clearly $\ell\in[0,\log d]$.
We thus get
\[\frac{\log \psi(n)}{n}=\log [\psi(n)]^{1/n}\to \ell,\]
and so
\[\psi(n)^{1/n}\to e^\ell:=c,\]
where $c\in[1,d]$.  Clearly $c=1$ for any $d$ if $\alpha_1=1$.  We need to show that this is the only case when this occurs.  By Theorem 2.9, we know that $c>1$ if $d=2$ and $\max(\alpha_1,\alpha_2)\ne1$.  Using a monotonicity argument (for larger size alphabets, we replace all the letters $2,3,\ldots,d$ by 2), it is easy to see that $c>1$ if $d\ge3$ and $\max_j(\alpha_j)<1$.  This concludes the proof.
\end{proof}
\section{Discussion and Open Questions}  In this section we list and briefly discuss some questions that seem to be quite non-trivial.

(a) One of the central questions in the Permutation Patterns community is that of packing patterns and words in larger ensembles; see, e.g., \cite{burstein}.  In a similar vein, we have the question of superpatterns, i.e., strings that contain all the patterns or words of a smaller size; see, e.g., \cite{yonah}.  A distinguished question in this area is the one posed by Alon, who conjectured (see \cite{yonah}) that a random permutation on $[n]=\left[\frac{k^2}{4}(1+o(1))\right]$ contains all the permutations of length $k$ with probability asymptotic to 1 as $n\to\infty$.  In the present context, a similar question might be:  What is the largest $k$ so that each element of $\{0,1\}^k$ appears as a subsequence of a binary random string with high probability?

(b) The basic question studied in this paper appears to not have been considered in the context of permutations; i.e., one might ask: What is the expected number of distinct patterns present in a random permutation on $[n]$?  

(c) Computation of the rates of exponential growth in Theorem 3.6 would, of course, be of interest as would be, alternatively, a solution of the recurrences in Sections 3.1 and 3.2.  Also, estimation of the width of the intervals of concentration of the number of distinct subsequences, around their expected values, would add significantly to our understanding of the situation.  

(d) An intriguing question (which leads to a wide area for further investigation) is the following.  In the baseline case of binary equiprobable letter generation, we have that 
$E(\phi(S_n))\sim2(1.5)^n$, which implies that the {\it average} number of occurrence of a subsequence is $\frac{1}{2}{2^n}/{(1.5)^n}=\frac{1}{2}(4/3)^n$.  Now a subsequence such as $1$ occurs ``just" around $n/2$ times, and the sequence $11\ldots1$ with $n/2$ ones occurs an average of ${{n}\choose{n/2}}\cdot\frac{1}{2^{n/2}}$ times, which simplifies, via Stirling's formula, to around $\sqrt{2}^n$, ignoring constants and polynomial factors.  The same is true of any sequence of length $n/2$;  it is, on average, over-represented.  We might ask, however, what length sequences occur more-or-less an average number $(1.33)^n$ of times.  We can parametrize by setting $k=xn$ and equating the expected number of occurrences of a $k$-long sequence to $(1.33)^n$.  We seek, in other words, the solution to the equation
\[{{n}\choose{xn}}\frac{1}{2^{xn}}=(1.33)^n.\] 
Ignoring non-exponential terms and employing Stirling's approximation, the above reduces to
\[2^xx^x(1-x)^{1-x}=0.75,\] which, via Wolfram Alpha, yields the solutions $x=.123\ldots$ and $x=.570\ldots$!   In a similar fashion we see that the expected number of occurrences of a sequence of length $(0.7729\ldots)n$ or longer is smaller than one.  Does this suggest that the solution to the Alon-like question stated in (a) above might be $k=(0.7729\ldots)(1+o(1))n$?

\nocite{*}

\bibliographystyle{alpha}
\bibliography{sample-dmtcs}

\begin{thebibliography}{7}
\providecommand{\natexlab}[1]{#1}
\providecommand{\url}[1]{\texttt{#1}}
\expandafter\ifx\csname urlstyle\endcsname\relax
  \providecommand{\doi}[1]{doi: #1}\else
  \providecommand{\doi}{doi: \begingroup \urlstyle{rm}\Url}\fi

\bibitem[Arratia(1999)]{arratia}
R.~Arratia.
\newblock On the stanley-wilf conjecture for the number of permutations
  avoiding a given pattern.
\newblock \emph{Electronic Journal of Combinatorics}, 6, 1999.

\bibitem[Athreya and Fidkowski(2000)]{athreya}
J.~Athreya and L.~Fidkowski.
\newblock Number theory, balls in boxes, and the asymptotic uniqueness of
  maximal discrete order statistics.
\newblock \emph{Integers: Electronic Journal of Combinatorial Number Theory},
  0, 2000.

\bibitem[Biers-Ariel et~al.(2016)Biers-Ariel, Godbole, and Zhang]{yonah}
Y.~Biers-Ariel, A.~Godbole, and Y.~Zhang.
\newblock Some results on superpatterns for preferential arrangements.
\newblock \emph{Advances in Applied Mathematics}, 81:\penalty0 202--212, 2016.

\bibitem[Burstein et~al.(2002-2003)Burstein, H\"ast\"o, and Mansour]{burstein}
A.~Burstein, P.~H\"ast\"o, and T.~Mansour.
\newblock Packing patterns into words.
\newblock \emph{Electronic Journal of Combinatorics}, 9\penalty0 (2),
  2002-2003.

\bibitem[Collins()]{Collins}
M.~Collins.
\newblock The number of distinct subsequences of a random binary string,
  https://arxiv.org/pdf/1310.7288.pdf.

\bibitem[Elzinga et~al.(2008)Elzinga, Rahmann, and Hui~Wang]{Elzinga}
C.~Elzinga, S.~Rahmann, and H.~Hui~Wang.
\newblock Algorithms for subsequence combinatorics.
\newblock \emph{Theoretical Computer Science}, 409\penalty0 (3):\penalty0
  394--404, 2008.

\bibitem[Flaxman et~al.(2004)Flaxman, Harrow, and Sorkin]{Flaxman}
A.~Flaxman, A.~Harrow, and G.~Sorkin.
\newblock Strings with maximally many distinct subsequences and substrings.
\newblock \emph{Electronic Journal of Combinatorics}, 11\penalty0 (1), 2004.

\end{thebibliography}
\label{sec:biblio}

\end{document}